\newcommand{\C}{\mathbb{C}}
\newcommand{\co}{\colon\thinspace}
\theoremstyle{plain}
\newtheorem{theorem}{Theorem}[section]
\newtheorem{corollary}[theorem]{Corollary}
\newtheorem{proposition}[theorem]{Proposition}
\newtheorem{lemma}[theorem]{Lemma}
\theoremstyle{definition}
\title{CR regular embeddings of $S^{4n-1}$ in $\C^{2n+1}$}
\author{Naohiko Kasuya}
\address{Naohiko Kasuya: 
Department of Mathematics, Kyoto Sangyo University,
Kamigamo-Motoyama, Kita-ku, Kyoto, 603-8555, Japan.}
\email{nkasuya@cc.kyoto-su.ac.jp}%
\date{}
\keywords{CR regular, totally real, embedding}
\subjclass[2010]{Primary 
32V40, 
53C40; 
Secondary 
57R40, 
}
\begin{document}\sloppy
\maketitle
\begin{abstract}
Ahern and Rudin have given an explicit construction of a totally real embedding of $S^3$ in $\C^3$. 
As a generalization of their example, 
we give an explicit example of a CR regular embedding of $S^{4n-1}$ in $\C^{2n+1}$. 
Consequently, we show that the odd dimensional sphere $S^{2m-1}$ with $m>1$  
admits a CR regular embedding in $\C^{m+1}$ if and only if $m$ is even.  
\end{abstract}

\section{Introduction}\label{sect:introduction}
Suppose $F\co M^n \to \C^q$ is a smooth embedding of an $n$-manifold in $\C^q$. 
Then, for any point $x\in M^n$ and the standard complex structure $J$ on $\C^q$, the following inequality holds: 
\[
\mathrm{dim}_\mathbb{C}(dF_x(T_xM^n)\cap JdF_x(T_xM^n))\geqq n-q. 
\]
If the equality holds for each point $x\in M^n$, the embedding $F$ is called a CR regular embedding, 
and when $n=q$, we say that $F$ is a totally real embedding and $F(M^n)$ is a totally real submanifold. 

Totally real submanifolds have been investigated by many geometers and topologists.  
Especially, the problem of determining which manifolds admit a totally real embedding in $\C^q$ 
has been widely studied from the viewpoint of the $h$-principle 
(Gromov \cite{MR0420697, MR0413206, MR864505}, 
Lees \cite{MR0410764}, Forstneri\v{c} \cite{MR880125}, Audin \cite{MR966952}). 
On the other hand, Ahern and Rudin \cite{MR787894} have constructed 
an explicit example of a totally real embedding of $S^3$ in $\C^3$. 
In the following, let $z=(z_1, z_2, \ldots, z_m)$ be the coordinates on $\C^m$ 
and we regard $S^{2m-1}$ as the unit sphere in $\C^m$. \\

\begin{theorem}[Ahern-Rudin \cite{MR787894}]~\label{AR}
Let $P(z_1,z_2)=z_2\bar{z}_1\bar{z}_2^2+iz_1\bar{z}_1^2\bar{z}_2$. 
Then, the embedding $F\co S^3\to \C^3$ defined by $F(z_1,z_2)=(z_1,z_2,P(z_1,z_2))$ is a totally real embedding.   
\end{theorem}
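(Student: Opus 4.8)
The plan is to verify directly that at every point of $S^3$ the image of the tangent space under $dF$ is a totally real $3$-plane in $\C^3$. That $F$ is an embedding is immediate: its first two components are the coordinate functions $z_1,z_2$, so $F$ is injective with injective differential, and $S^3$ is compact; the entire content of the theorem is the pointwise total-reality condition. Fix $x=(z_1,z_2)\in S^3$. Any vector in $dF_x(T_xS^3)\cap J\,dF_x(T_xS^3)$ is of the form $dF_x(v)=i\,dF_x(w)$ with $v,w\in T_xS^3$, and comparing the first two components forces $v=iw$. Since $S^3=\{|z_1|^2+|z_2|^2=1\}$ has tangent space $T_xS^3=\{w:\operatorname{Re}(\bar z_1 w_1+\bar z_2 w_2)=0\}$, the requirement that both $w$ and $iw$ be tangent to the sphere is equivalent to $\bar z_1 w_1+\bar z_2 w_2=0$, i.e.\ $w$ lies on the complex tangent line $H_xS^3=\C\cdot(-\bar z_2,\bar z_1)$. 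Writing the real differential as $dP_x=\partial P_x+\bar\partial P_x$ (its $\C$-linear and $\C$-antilinear parts), the leftover equation $dP_x(iw)=i\,dP_x(w)$ collapses to $\bar\partial P_x(w)=0$, that is, $\frac{\partial P}{\partial\bar z_1}(x)\,\bar w_1+\frac{\partial P}{\partial\bar z_2}(x)\,\bar w_2=0$.

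Substituting the generator $w=(-\bar z_2,\bar z_1)$ of $H_xS^3$, I conclude that $F$ is totally real at $x$ if and only if
\[
\Delta(x)\coloneqq z_1\,\frac{\partial P}{\partial\bar z_2}(x)-z_2\,\frac{\partial P}{\partial\bar z_1}(x)\ \neq\ 0 .
\]
Thus the theorem reduces to showing that this single scalar never vanishes on $S^3$. The next step is to compute the Wirtinger derivatives of $P(z_1,z_2)=z_2\bar z_1\bar z_2^2+iz_1\bar z_1^2\bar z_2$,
\[
\frac{\partial P}{\partial\bar z_1}=z_2\bar z_2^2+2iz_1\bar z_1\bar z_2,\qquad
\frac{\partial P}{\partial\bar z_2}=2z_2\bar z_1\bar z_2+iz_1\bar z_1^2 ,
\]
to substitute them into $\Delta$, and to use $|z_1|^2+|z_2|^2=1$. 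Writing $a=|z_1|^2$ and $b=|z_2|^2$, all the monomials collapse and one finds $\Delta=(2ab-b^2)+i(a^2-2ab)$.

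It then remains to check that $\Delta$ has no zero on the segment $\{a+b=1,\ a,b\geq 0\}$: vanishing of both parts requires $b(2a-b)=0$ and $a(a-2b)=0$ simultaneously, but $b=0$ gives $\Delta=i$, $a=0$ gives $\Delta=-1$, and for $a,b>0$ the relations $b=2a$ and $a=2b$ force $a=b=0$, a contradiction. Hence $\Delta\neq 0$ everywhere and $F$ is totally real. The only step requiring genuine care is the reduction in the first paragraph — identifying $H_xS^3$ and seeing that $dP_x(iw)=i\,dP_x(w)$ amounts to $\bar\partial P_x(w)=0$ — after which the computation is routine; and it is worth noting that the nonvanishing of $\Delta$ is exactly where the coefficient $i$ in the definition of $P$ is used, since without it one would get $\Delta=a^2-b^2=a-b$, which vanishes at $|z_1|=|z_2|=1/\sqrt2$.
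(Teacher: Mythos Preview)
Your argument is correct, and it lands on exactly the same scalar test as the paper: the paper shows (via Proposition~\ref{ns condition}, itself a specialization of the Jacobowitz--Landweber criterion) that $F$ is totally real if and only if $z$ and $\dfrac{\partial P}{\partial\bar z}$ are linearly independent over $\C$, and then computes
\[
z_2\,\frac{\partial P}{\partial\bar z_1}-z_1\,\frac{\partial P}{\partial\bar z_2}
=|z_2|^2\bigl(|z_2|^2-2|z_1|^2\bigr)-i\,|z_1|^2\bigl(|z_1|^2-2|z_2|^2\bigr),
\]
which is precisely $-\Delta(x)$ in your notation, and observes that it vanishes only at the origin. So the computational heart is identical; your case split on $a,b$ just spells out that last observation.

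The difference is in the reduction step. The paper routes it through the defining-equation criterion $\partial\rho_1\wedge\partial\rho_2\wedge\partial\rho_3\neq 0$, packaged as Proposition~\ref{ns condition}, whereas you work straight from the definition: identify $H_xS^3=\C\cdot(-\bar z_2,\bar z_1)$ and note that a nontrivial complex tangency forces $\bar\partial P_x|_{H_xS^3}=0$. Your route is self-contained and conceptually transparent for $S^3$; the paper's route has the advantage that Proposition~\ref{ns condition} applies uniformly to $S^{2m-1}\to\C^{m+q}$, which is exactly what drives the proof of Theorem~\ref{main}. Your closing remark about the role of the coefficient $i$ is a nice addition not made explicit in the paper.
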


CR regular embeddings also have been studied by many authors from various viewpoints  
(Cartan \cite{MR1553196}, Tanaka \cite{MR0145555, MR0221418}, Wells \cite{MR0241690, MR0237823}, 
Lai \cite{MR0314066}, Jacobowitz-Landweber \cite{MR2304588}, Slapar \cite{MR3096869, MR3345510}, 
Torres \cite{MR3503716, MR3777134}, Elgindi \cite{1607.02384}). 
In \cite[Section 5]{MR3871721} and \cite{Erratum}, the author and Takase have worked on 
the problem of determining when the $n$-sphere $S^n$ admits a CR regular embedding in $\C^q$ 
and have given some necessary conditions on $(n, q)$. 
In particular, we have proved that the $(4n+1)$-dimensional sphere $S^{4n+1}$ 
does not admit a CR regular embedding in $\C^{2n+2}$ (\cite[Theorem~5.2~(c)]{Erratum}).  
In this paper, we settle the remaining codimension three case by generalizing Theorem~\ref{AR}. 
The following is our main theorem. \\

\begin{theorem}~\label{main}
Let $$Q(z_1, z_2, \ldots , z_{2n-1}, z_{2n})=\sum_{k=1}^n P(z_{2k-1}, z_{2k}), $$
where $P(x, y)=y\bar{x}\bar{y}^2+ix\bar{x}^2\bar{y}$. 
Then, the embedding $F\co S^{4n-1}\to \C^{2n+1}$ defined by 
$$F(z_1, z_2, \ldots , z_{2n-1}, z_{2n})=\Big(z_1, z_2, \ldots , z_{2n-1}, z_{2n}, Q(z_1, z_2, \ldots , z_{2n-1}, z_{2n}) \Big)$$ 
is a CR regular embedding. 
\end{theorem}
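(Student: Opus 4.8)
\emph{Proof plan.}\ The plan is to verify CR regularity pointwise, after reformulating it as an elementary linear-algebra condition on the $\bar z$-gradient of $Q$, and then to exploit the block structure of $Q$ to reduce that condition to the two-variable computation already implicit in Theorem~\ref{AR}.

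First I would note that $F$ is automatically an embedding: composing $F$ with the projection $\C^{2n+1}\to\C^{2n}$ onto the first $2n$ coordinates recovers the standard inclusion $S^{4n-1}\hookrightarrow\C^{2n}$, so $F$ is an injective immersion of a compact manifold. Next, $F(S^{4n-1})=\{(z,w)\in\C^{2n}\times\C:\rho_1=0,\ \sigma=0\}$, where $\rho_1(z)=|z_1|^2+\cdots+|z_{2n}|^2-1$ and $\sigma(z,w)=w-Q(z)$; writing $\sigma=\rho_2+i\rho_3$ with $\rho_2,\rho_3$ real exhibits $F(S^{4n-1})$ as a codimension-three submanifold with three defining functions of independent differentials. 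I would then invoke (and, if needed, reprove) the standard criterion that such a submanifold is CR regular at a point --- i.e.\ $dF_x(T_xS^{4n-1})+J\,dF_x(T_xS^{4n-1})=\C^{2n+1}$, which is exactly the equality case of the stated inequality, since $\dim_{\C}(V\cap JV)=\dim_{\R}V-q$ forces $\dim_{\R}(V+JV)=2q$ --- precisely when the $(1,0)$-forms $\partial\rho_1,\partial\rho_2,\partial\rho_3$, equivalently $\partial\rho_1,\partial\sigma,\partial\bar\sigma$, are $\C$-linearly independent there. The short justification: $V+JV=\C^{q}$ is equivalent to the vanishing of the intersection of the annihilators of $V$ and $JV$, i.e.\ to the $\R$-independence of the six real covectors $d\rho_j$ and $d\rho_j\circ J$, which, after decomposing into $(1,0)$ and $(0,1)$ parts and using $\overline{\partial\rho_j}=\bar\partial\rho_j$, is the $\C$-independence of $\partial\rho_1,\partial\rho_2,\partial\rho_3$.

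Since $\partial\sigma=dw-\partial Q$ is the only one of the three forms that contains $dw$, CR regularity reduces to the $\C$-independence of $\partial\rho_1=\sum_j\bar z_j\,dz_j$ and $\partial\bar\sigma=-\sum_j\overline{\partial Q/\partial\bar z_j}\,dz_j$; taking complex conjugates, this is the independence in $\C^{2n}$ of the vectors $z=(z_1,\dots,z_{2n})$ and $v(z)=\bigl(\partial Q/\partial\bar z_1,\dots,\partial Q/\partial\bar z_{2n}\bigr)$. Because $|z|=1$, this can fail only if $v(z)=\lambda z$ for some $\lambda\in\C$. Here the block form $Q=\sum_{k=1}^{n}P(z_{2k-1},z_{2k})$ is decisive: $v(z)$ splits blockwise, so $v(z)=\lambda z$ is equivalent to $(\partial P/\partial\bar x)(a,b)=\lambda a$ and $(\partial P/\partial\bar y)(a,b)=\lambda b$ for every $k$, where $(a,b)=(z_{2k-1},z_{2k})$. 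The computation $\partial P/\partial\bar x=\bar y(|y|^2+2i|x|^2)$ and $\partial P/\partial\bar y=\bar x(2|y|^2+i|x|^2)$ then lets me multiply the first equation by $b$, the second by $a$, and equate; writing $s=|a|^2$, $t=|b|^2$, this gives $t(t+2is)=s(2t+is)$, whose real and imaginary parts yield $t^2=2st=s^2$, hence $s=t=0$, i.e.\ $z_{2k-1}=z_{2k}=0$. Running this over all $k$ gives $z=0$, contradicting $|z|=1$; so no such $\lambda$ exists and $F$ is CR regular at every point.

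I do not anticipate a genuine obstacle: the argument is linear algebra plus a two-line identity, and the one step needing care is the translation of the intrinsic CR-regularity condition into the statement about $z$ and $v(z)$. It is worth observing that the block decomposition confines the final contradiction to each $\C^2$-factor, where it is precisely the non-vanishing of the determinant $z_1v_2-z_2v_1$ for every $(z_1,z_2)\neq(0,0)$ that makes the Ahern--Rudin map of Theorem~\ref{AR} totally real; this is why the present construction generalizes so transparently.
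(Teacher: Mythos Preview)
Your proposal is correct and follows essentially the same route as the paper: reduce CR regularity to the $\C$-linear independence of $z$ and $\partial Q/\partial\bar z$ via the Jacobowitz--Landweber defining-function criterion (the paper packages this as Proposition~\ref{ns condition}, which you re-derive in your second paragraph), and then use the block structure of $Q$ to reduce to the two-variable Ahern--Rudin computation, where the quantity $b\,\partial P/\partial\bar x-a\,\partial P/\partial\bar y=t(t-2s)-is(s-2t)$ vanishes only at the origin. The only cosmetic difference is that the paper argues directly (one nonzero block already witnesses independence) while you argue by contraposition (assuming $v(z)=\lambda z$ forces every block to vanish); these are equivalent.
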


\begin{corollary}
Let $m$ be an integer greater than $1$. 
The odd dimensional sphere $S^{2m-1}$ admits a CR regular embedding in $\C^{m+1}$ if and only if $m$ is even. 
\end{corollary}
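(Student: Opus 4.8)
Granting Theorem~\ref{main}, the corollary is just a matter of parities. If $m=2n$ is even, then $S^{2m-1}=S^{4n-1}$ and $\C^{m+1}=\C^{2n+1}$, so Theorem~\ref{main} supplies the desired CR regular embedding. If $m$ is odd and $m>1$, write $m=2n+1$ with $n\geqq1$; then $S^{2m-1}=S^{4n+1}$ and $\C^{m+1}=\C^{2n+2}$, and \cite[Theorem~5.2~(c)]{Erratum} states that $S^{4n+1}$ admits no CR regular embedding in $\C^{2n+2}$. So all the work is in Theorem~\ref{main}, which I would prove as follows.

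That $F$ is an embedding is immediate, since composing it with the projection $\C^{2n+1}\to\C^{2n}$ onto the first $2n$ coordinates recovers the standard inclusion $S^{4n-1}\hookrightarrow\C^{2n}$. The plan for CR regularity is to exploit the graph structure of $F$. Fix $z\in S^{4n-1}\subset\C^{2n}$, let $H_z=\{v\in\C^{2n}:\sum_{j}\bar z_jv_j=0\}$ be the maximal complex subspace of $T_zS^{4n-1}$, of complex dimension $2n-1$, and put $V=dF_z(T_zS^{4n-1})\subset\C^{2n+1}$. Splitting $dQ=\partial Q+\bar\partial Q$ and using $Jv=iv$, one checks directly that $(v,dQ_z(v))\in V$ lies in $JV$ if and only if $v\in H_z$ and $\bar\partial Q_z(v)=0$; thus $V\cap JV$ is $\C$-linearly isomorphic to $\ker\big(\bar\partial Q_z|_{H_z}\big)$. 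Since a nonzero $\C$-antilinear functional on $\C^{2n-1}$ has kernel of complex dimension exactly $2n-2=(4n-1)-(2n+1)$, whereas a vanishing one has kernel of dimension $2n-1$, it follows that $F$ is CR regular precisely when $\bar\partial Q_z$ fails to vanish identically on $H_z$ for every $z\in S^{4n-1}$. Writing $c(z)=\big(\tfrac{\partial Q}{\partial\bar z_1}(z),\dots,\tfrac{\partial Q}{\partial\bar z_{2n}}(z)\big)$ and unwinding the description of $H_z$, this is equivalent to saying that $c(z)$ is not a complex scalar multiple of $z$, for any $z\in S^{4n-1}$.

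Here the sum-over-pairs shape of $Q$ does the job. Since the $k$-th summand of $Q$ depends only on $z_{2k-1}$ and $z_{2k}$, the relation $c(z)=\lambda z$ decouples into $n$ independent two-variable systems, one per block. Differentiating $P(x,y)=y\bar x\bar y^2+ix\bar x^2\bar y$ gives, with $a=z_{2k-1}$ and $b=z_{2k}$,
\[
c_{2k-1}=\bar b\,(|b|^2+2i|a|^2),\qquad c_{2k}=\bar a\,(2|b|^2+i|a|^2).
\]
Assume $c(z)=\lambda z$ and fix $k$. If $a=0$ or $b=0$, one of the block equations $\bar b\,(|b|^2+2i|a|^2)=\lambda a$, $\bar a\,(2|b|^2+i|a|^2)=\lambda b$ forces the other variable to vanish too. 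If $a\neq0$ and $b\neq0$, multiplying these by $\bar a$ and $\bar b$ respectively gives $\bar a\bar b\,(|b|^2+2i|a|^2)=\lambda|a|^2$ and $\bar a\bar b\,(2|b|^2+i|a|^2)=\lambda|b|^2$; cross-multiplying to eliminate $\lambda$ and cancelling the nonzero factor $\bar a\bar b$ yields $|b|^2\big(|b|^2+2i|a|^2\big)=|a|^2\big(2|b|^2+i|a|^2\big)$, whose real and imaginary parts read $|b|^2=2|a|^2$ and $2|b|^2=|a|^2$ --- absurd. So each block must have $z_{2k-1}=z_{2k}=0$, whence $z=0\notin S^{4n-1}$, a contradiction. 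Therefore $c(z)\notin\C\,z$ for every $z$, and $F$ is CR regular.

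I expect the only genuinely delicate part to be the linear-algebra reduction in the second paragraph: one must track exactly which vectors of $V$ lie in $JV$, use the parity of $\dim_\R V=4n-1$ to see that $\dim_\C(V\cap JV)$ can only be $2n-2$ or $2n-1$, and then identify ``$\bar\partial Q_z$ nonzero on $H_z$'' with ``$c(z)$ and $z$ are $\C$-linearly independent''. After that the problem collapses to the single two-variable computation displayed above, which is precisely the mechanism behind the Ahern--Rudin embedding (Theorem~\ref{AR}, the case $n=1$), and the rest is formal.
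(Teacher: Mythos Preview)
Your proof is correct and, at its core, follows the same route as the paper: reduce CR regularity of the graph embedding to the $\C$-linear independence of $z$ and $\dfrac{\partial Q}{\partial\bar z}(z)$, then use the block structure of $Q$ to reduce to the two-variable Ahern--Rudin computation. The one place you differ is in how you reach that criterion. The paper packages it as Proposition~\ref{ns condition}, deduced from the Jacobowitz--Landweber defining-function condition $\partial\rho_1\wedge\cdots\wedge\partial\rho_{2q+1}\neq0$ together with Lemma~\ref{key}; you instead compute $V\cap JV$ directly from the definition, identifying it with $\ker(\bar\partial Q_z|_{H_z})$ and then reading off the condition ``$c(z)\notin\C z$''. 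Your derivation is more self-contained (no external criterion needed), while the paper's has the advantage of being stated once for arbitrary $q$ functions $f_1,\dots,f_q$. After that the arguments coincide: the paper shows that in any nonzero block the determinant
\[
z_{2j}\frac{\partial P}{\partial\bar z_{2j-1}}-z_{2j-1}\frac{\partial P}{\partial\bar z_{2j}}
=|z_{2j}|^2(|z_{2j}|^2-2|z_{2j-1}|^2)-i|z_{2j-1}|^2(|z_{2j-1}|^2-2|z_{2j}|^2)
\]
is nonzero, which is exactly the equation whose real and imaginary parts you extracted from the assumption $c(z)=\lambda z$.
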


\section{Proof of Main Theorem}
For a smooth complex-valued function $f$ on $\C^m$, we use the following notations: 
\begin{eqnarray*}
\partial f=\sum_{j=1}^m \frac{\partial f}{\partial z_j}dz_j, \;
\bar{\partial }f=\sum_{j=1}^m \frac{\partial f}{\partial \bar{z}_j}d\bar{z}_j, \\
\frac{\partial f}{\partial z}=(\frac{\partial f}{\partial z_1}, \ldots , \frac{\partial f}{\partial z_m}), \; 
\frac{\partial f}{\partial \bar{z}}=(\frac{\partial f}{\partial \bar{z}_1}, \ldots , \frac{\partial f}{\partial \bar{z}_m}). \\
\end{eqnarray*}

\begin{lemma}~\label{key}
Let $u$ and $v$ be the real part and the imaginary part of a smooth function $f\co \C^m \to \C$, respectively. 
Then, $\partial u \wedge \partial v=\frac{i}{2}\partial f\wedge (\overline {\bar{\partial }f})$. 
\end{lemma}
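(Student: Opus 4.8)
The plan is to reduce the identity to the elementary algebra of $1$-forms under the wedge product. First I would observe that the form $\overline{\bar\partial f}$ appearing on the right-hand side is nothing but $\partial\bar f$: conjugating $\bar\partial f=\sum_j(\partial f/\partial\bar z_j)\,d\bar z_j$ term by term and using that $\overline{\partial/\partial\bar z_j}=\partial/\partial z_j$ as operators, together with $\overline{d\bar z_j}=dz_j$, gives $\overline{\bar\partial f}=\sum_j(\partial\bar f/\partial z_j)\,dz_j=\partial\bar f$. Hence it suffices to prove $\partial u\wedge\partial v=\tfrac{i}{2}\,\partial f\wedge\partial\bar f$.

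Next I would exploit that $\partial$ is $\R$-linear and therefore commutes with taking real and imaginary parts: from $f=u+iv$ and $\bar f=u-iv$ we get $\partial f=\partial u+i\,\partial v$ and $\partial\bar f=\partial u-i\,\partial v$. Substituting these into the right-hand side and expanding the wedge product of $1$-forms, the terms $\partial u\wedge\partial u$ and $\partial v\wedge\partial v$ vanish while $\partial v\wedge\partial u=-\partial u\wedge\partial v$, so
\[
\frac{i}{2}\,\partial f\wedge\partial\bar f=\frac{i}{2}\,(\partial u+i\,\partial v)\wedge(\partial u-i\,\partial v)=\frac{i}{2}\,(-2i)\,\partial u\wedge\partial v=\partial u\wedge\partial v,
\]
which is exactly the claimed equality.

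The computation is entirely routine and presents no genuine obstacle; the only point requiring a moment's care is the bookkeeping identity $\overline{\bar\partial f}=\partial\bar f$, i.e. that complex conjugation interchanges the operators $\partial$ and $\bar\partial$, which is immediate from the definitions of the Wirtinger derivatives.
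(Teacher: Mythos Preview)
Your proof is correct and follows essentially the same route as the paper: both identify $\overline{\bar\partial f}=\partial\bar f=\partial u-i\,\partial v$, write $\partial f=\partial u+i\,\partial v$, and expand the wedge product to obtain $-2i\,\partial u\wedge\partial v$. The paper's version is simply terser, omitting the justification of $\overline{\bar\partial f}=\partial\bar f$ that you spell out.
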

\begin{proof}
Since $f=u+iv$, we have $\partial f=\partial u+i\partial v$ and 
$\overline{\bar{\partial }f}=\partial {\bar{f}}=\partial {u}-i\partial {v}$. 
Hence, 
\begin{eqnarray*}
\partial f\wedge (\overline {\bar{\partial }f})
=(\partial u+i\partial v)\wedge (\partial {u}-i\partial {v})=-2i\partial u \wedge \partial v.
\end{eqnarray*}
\end{proof}

In \cite{MR2304588}, Jacobowitz and Landweber have given a necessary and sufficient condition 
for an embedding to be a CR regular embedding. \\

\begin{proposition}[Jacobowitz-Landweber \cite{MR2304588}]
An embedding $F\co M^{2n+k}\to \C^{n+k}$ is a CR regular embedding if and only if 
the submanifold $F(M^{2n+k})$ is given by simultaneous real equations 
$$\rho_j (z_1, z_2, \ldots , z_{n+k})=0 \; \; (j=1, \ldots , k) $$ 
satisfying $\partial \rho_1\wedge \cdots \wedge \partial \rho_k\ne 0$ at each point of $F(M^{2n+k})$. 
\end{proposition}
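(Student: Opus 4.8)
The plan is to reduce the proposition to a pointwise linear-algebra statement that identifies the maximal complex tangent subspace of $F(M^{2n+k})$ with the common kernel of the $(1,0)$-forms $\partial\rho_1,\dots,\partial\rho_k$, and then to read off its dimension. Since $F$ is an embedding, $F(M^{2n+k})$ is a smooth real submanifold of $\C^{n+k}\cong\R^{2n+2k}$ of real codimension $k$, which can locally be written as a common zero set $\rho_1=\cdots=\rho_k=0$ with $d\rho_1,\dots,d\rho_k$ linearly independent over $\R$; conversely any such zero set is a submanifold of this codimension. I would first check that the hypothesis $\partial\rho_1\wedge\cdots\wedge\partial\rho_k\ne 0$ already guarantees the $\R$-independence of the $d\rho_j$: writing $d\rho_j=\partial\rho_j+\overline{\partial\rho_j}$ and comparing $(1,0)$-parts, any real linear relation among the $d\rho_j$ descends to the same relation among the $\partial\rho_j$. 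Thus the defining-equation side genuinely describes a codimension-$k$ submanifold, and what remains is to compare, at each point $p\in F(M^{2n+k})$, the complex dimension of $H_p:=V\cap JV$ with the $\C$-rank of the $\partial\rho_j$, where $V:=T_pF(M^{2n+k})=\bigcap_j\ker d\rho_j$.

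The heart of the argument is a short computation of the behaviour of the differentials under $J$. Writing $z_j=x_j+iy_j$ and using $J\partial_{x_j}=\partial_{y_j}$, one obtains $dz_j(Jw)=i\,dz_j(w)$ and $d\bar z_j(Jw)=-i\,d\bar z_j(w)$ for every real tangent vector $w$, hence $\partial\rho(Jw)=i\,\partial\rho(w)$ and $\bar\partial\rho(Jw)=-i\,\bar\partial\rho(w)$. Because each $\rho_j$ is real one also has $\bar\partial\rho_j(w)=\overline{\partial\rho_j(w)}$, so that $d\rho_j(w)=2\,\mathrm{Re}\,\partial\rho_j(w)$ and $d\rho_j(Jw)=-2\,\mathrm{Im}\,\partial\rho_j(w)$. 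Now $w\in H_p$ exactly when $w\in V$ and $Jw\in V$, i.e. when $d\rho_j(w)=0$ and $d\rho_j(Jw)=0$ for all $j$; by the two identities this is equivalent to $\partial\rho_j(w)=0$ for all $j$. Since $\partial\rho_j$ satisfies $\partial\rho_j(Jw)=i\,\partial\rho_j(w)$ it is $\C$-linear, so $H_p$ is precisely the kernel of the $\C$-linear map $\C^{n+k}\to\C^{k}$, $w\mapsto(\partial\rho_1(w),\dots,\partial\rho_k(w))$.

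Finally I would count dimensions. This kernel has complex dimension $(n+k)-\mathrm{rank}_{\C}(\partial\rho_1,\dots,\partial\rho_k)$; as there are only $k$ forms, the rank is at most $k$, which re-derives the lower bound $\dim_{\C}H_p\ge n$ from the introduction. Equality $\dim_{\C}H_p=n$ holds if and only if the rank equals $k$, i.e. the $\partial\rho_j$ are $\C$-linearly independent at $p$, which is exactly $\partial\rho_1\wedge\cdots\wedge\partial\rho_k\ne 0$. Since CR regularity is by definition the condition that $\dim_{\C}H_p$ attains its minimum $(2n+k)-(n+k)=n$ at every point, this equivalence proves both implications at once: in the forward direction one takes any local defining functions (which exist because $F$ is an embedding) and deduces the wedge condition from CR regularity, while in the reverse direction the wedge condition forces $\dim_{\C}H_p=n$ everywhere. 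The step I expect to require the most care is the passage between the real and complex pictures in the middle paragraph: keeping straight that $\partial\rho_j$, a priori only a complex-valued $\R$-linear functional, is in fact $\C$-linear, so that its common kernel is a genuine complex subspace whose dimension is governed by the complex rank of the matrix $(\partial\rho_j/\partial z_l)$, and that it is precisely the reality of the $\rho_j$ that collapses the two real conditions $d\rho_j(w)=d\rho_j(Jw)=0$ into the single complex condition $\partial\rho_j(w)=0$.
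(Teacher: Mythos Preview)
Your argument is correct, but there is nothing to compare it to: the paper does not prove this proposition at all. It is quoted verbatim as a result of Jacobowitz and Landweber \cite{MR2304588} and then immediately applied to derive Proposition~\ref{ns condition}. What you have written is a clean self-contained proof of the cited result---the identification of $H_p=T_pF(M)\cap J\,T_pF(M)$ with $\bigcap_j\ker\partial\rho_j$ via the identity $d\rho_j(w)=2\,\mathrm{Re}\,\partial\rho_j(w)$, $d\rho_j(Jw)=-2\,\mathrm{Im}\,\partial\rho_j(w)$ for real $\rho_j$, followed by a rank count---and it is essentially the standard argument one finds in the CR literature. Your remark that $\partial\rho_1\wedge\cdots\wedge\partial\rho_k\ne 0$ already forces $d\rho_1,\dots,d\rho_k$ to be $\R$-independent (by looking at $(1,0)$-parts) is a useful observation that makes the ``if'' direction genuinely produce a codimension-$k$ submanifold; this point is often left implicit.
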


Applying this proposition to the case where the submanifold is the graph of a function, we obtain the following. \\

\begin{proposition}~\label{ns condition}
Let $f_j$ $(j=1, \ldots , q)$ be smooth complex-valued functions on $\C^m$ with $1\leq q \leq m-1$. 
The embedding $F\co S^{2m-1}\to \C^{m+q}$ defined by 
$$F(z)=\Big(z, f_1(z), \ldots , f_q(z)\Big)$$ 
is a CR regular embedding if and only if the $(q+1)$ complex vectors 
$z$, $\dfrac{\partial f_j}{\partial \bar{z}}(z)$ $(j=1, \ldots , q)$ 
are linearly independent over $\C$ for each $z\in S^{2m-1}$. 
\end{proposition}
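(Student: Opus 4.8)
The plan is to realize $F(S^{2m-1})$ as the regular zero set of an explicit family of real functions and then apply the Jacobowitz--Landweber proposition quoted above. First, write $(z,w)=(z_1,\dots,z_m,w_1,\dots,w_q)$ for the coordinates on $\C^{m+q}=\C^m\times\C^q$. Since the first $m$ coordinates of $F(z)$ reproduce $z$, the map $F$ is an injective immersion of the compact manifold $S^{2m-1}$, hence an embedding, and its image is the common zero set of the $2q+1$ real functions
\[
\rho_{2j-1}=\mathrm{Re}\bigl(w_j-f_j(z)\bigr),\qquad \rho_{2j}=\mathrm{Im}\bigl(w_j-f_j(z)\bigr)\quad (j=1,\dots,q),\qquad \rho_{2q+1}=|z|^2-1 .
\]
I would note that this is the correct codimension: in the notation of the Jacobowitz--Landweber proposition one has $2n+k=2m-1$ and $n+k=m+q$, hence $n=m-1-q\ge 0$ and $k=2q+1$. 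By that proposition, $F$ is CR regular if and only if $\Theta:=\partial\rho_1\wedge\cdots\wedge\partial\rho_{2q+1}$ is nowhere vanishing on $F(S^{2m-1})$, and the problem reduces to evaluating $\Theta$.

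Next I would compute $\Theta$. Put $g_j=w_j-f_j$, so that $\rho_{2j-1}=\mathrm{Re}\,g_j$ and $\rho_{2j}=\mathrm{Im}\,g_j$; grouping the factors of $\Theta$ into consecutive pairs (each a $2$-form, so only the overall sign is affected) and applying Lemma~\ref{key} to each pair gives
\[
\Theta=\Bigl(\tfrac{i}{2}\Bigr)^{q}\Bigl(\bigwedge_{j=1}^{q}\partial g_j\wedge\overline{\bar\partial g_j}\Bigr)\wedge\partial\rho_{2q+1}.
\]
Here $\partial g_j=dw_j-\partial f_j$, while $\overline{\bar\partial g_j}=-\sum_{k}\overline{\partial f_j/\partial\bar z_k}\,dz_k$ and $\partial\rho_{2q+1}=\partial|z|^2=\sum_{k}\bar z_k\,dz_k$ involve only $dz_1,\dots,dz_m$. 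Writing each factor as $\partial g_j\wedge\overline{\bar\partial g_j}=dw_j\wedge\overline{\bar\partial g_j}-\partial f_j\wedge\overline{\bar\partial g_j}$ and expanding the product, every resulting term is a wedge of $1$-forms that has all of $\overline{\bar\partial g_1},\dots,\overline{\bar\partial g_q}$ and $\partial\rho_{2q+1}$ among its factors, hence is a multiple of $\overline{\bar\partial g_1}\wedge\cdots\wedge\overline{\bar\partial g_q}\wedge\partial\rho_{2q+1}$; on the other hand the single term obtained by taking $dw_j\wedge\overline{\bar\partial g_j}$ from each factor is, up to sign, $dw_1\wedge\cdots\wedge dw_q\wedge\overline{\bar\partial g_1}\wedge\cdots\wedge\overline{\bar\partial g_q}\wedge\partial\rho_{2q+1}$ and is the only term involving all of $dw_1,\dots,dw_q$. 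Therefore $\Theta\ne 0$ if and only if $\overline{\bar\partial g_1}\wedge\cdots\wedge\overline{\bar\partial g_q}\wedge\partial\rho_{2q+1}\ne 0$; since, read off along $dz_1,\dots,dz_m$, these $q+1$ covectors are (up to scalars) $\overline{\partial f_1/\partial\bar z},\dots,\overline{\partial f_q/\partial\bar z}$ and $\bar z$, and complex conjugation preserves linear independence, this is precisely the $\C$-linear independence of $z,\partial f_1/\partial\bar z(z),\dots,\partial f_q/\partial\bar z(z)$, as required.

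The hard part, such as it is, will be that last equivalence, and especially its ``only if'' direction: one has to be sure that a $\C$-linear relation among $z$ and the $\partial f_j/\partial\bar z(z)$ genuinely forces $\Theta=0$ and not merely the vanishing of one summand. This is exactly where one uses that $dw_1,\dots,dw_q$ are independent of $dz_1,\dots,dz_m$ and occur in $\partial g_j$ only through the single term $dw_j$, so that every term of $\Theta$ keeps a full copy of each $\overline{\bar\partial g_j}$ alongside $\partial\rho_{2q+1}$. The remaining ingredients --- that $F$ is an embedding, the codimension bookkeeping $n=m-1-q$, $k=2q+1$, the identification of $F(S^{2m-1})$ with the zero set of the $\rho_\ell$, the pairing of the $\mathrm{Re}/\mathrm{Im}$ factors via Lemma~\ref{key}, and the substitutions for $\partial g_j$, $\overline{\bar\partial g_j}$, $\partial|z|^2$ --- are routine. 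One could also bypass Lemma~\ref{key} entirely: the combination $i\,\partial\rho_{2j-1}+\partial\rho_{2j}$ equals $i\,\overline{\bar\partial g_j}$, which eliminates $dw_j$, so any $\C$-linear relation among $\partial\rho_1,\dots,\partial\rho_{2q+1}$ collapses, upon comparing the $dw_j$- and $dz_k$-coefficients, to a relation among $\bar z,\overline{\partial f_1/\partial\bar z},\dots,\overline{\partial f_q/\partial\bar z}$.
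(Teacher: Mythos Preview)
Your proof is correct and follows essentially the same route as the paper's: write $F(S^{2m-1})$ as the zero set of the $2q+1$ real functions coming from $|z|^2-1$ and the real and imaginary parts of $w_j-f_j(z)$, pair the latter via Lemma~\ref{key}, and reduce the nonvanishing of $\partial\rho_1\wedge\cdots\wedge\partial\rho_{2q+1}$ to the nonvanishing of $\partial|z|^2\wedge\overline{\bar\partial f_1}\wedge\cdots\wedge\overline{\bar\partial f_q}$. The only differences are cosmetic (your indexing of the $\rho_\ell$ and your use of $w_j$ in place of $z_{m+j}$); in fact you spell out more carefully than the paper does why the factors $dw_j-\partial f_j$ can be dropped in both directions of the equivalence, which is a point the paper leaves implicit.
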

\begin{proof}
The submanifold $F(S^{2m-1})$ is described as 
$$\left\{(z, z_{m+1}, \ldots , z_{m+q})\in \C^{m+q} \mid ||z||^2=1, z_{m+1}=f_1(z), \ldots , z_{m+q}=f_q(z) \right\}.$$
We define smooth real functions $\rho_1(z)$, $\cdots $, $\rho_{2q+1}(z)$ by 
\begin{eqnarray*}
\rho _{1}=-1+\sum_{k=1}^mz_k\bar{z}_k, \; z_{m+j}-f_j(z) =\rho_{2j}+i \rho_{2j+1}\;  (j=1, \ldots , q),  
\end{eqnarray*}
for which we have 
$$F(S^{2m-1})=\rho_1^{-1}(0)\cap \rho_2^{-1}(0) \cap \cdots \cap \rho_{2q+1}^{-1}(0).$$
By Lemma~\ref{key}, 
\begin{eqnarray*}
\partial \rho_{2j}\wedge \partial \rho_{2j+1}
=\dfrac{i}{2}\partial (z_{m+j}-f_j(z))\wedge \overline{\bar{\partial } (z_{m+j}-f_j(z))}
=\dfrac{i}{2}\overline{(\bar{\partial } f_j)}\wedge (dz_{m+j}-\partial f_j). 
\end{eqnarray*}
Therefore, $\partial \rho_1 \wedge \partial \rho_2 \wedge \cdots \wedge \partial \rho_{2q+1} \ne 0$ holds if and only if 
\begin{eqnarray*}
(\bar{z}_1dz_1+\cdots +\bar{z}_mdz_m)
\wedge 
\Big((\overline{\frac{\partial f_1}{\partial \bar{z}_1}}) dz_1+\cdots +(\overline{\frac{\partial f_1}{\partial \bar{z}_m}}) dz_m\Big)
\wedge \cdots 
\wedge 
\Big((\overline{\frac{\partial f_q}{\partial \bar{z}_1}}) dz_1+\cdots +(\overline{\frac{\partial f_q}{\partial  \bar{z}_m}})dz_m\Big)
\ne 0
\end{eqnarray*}
holds. 
This condition is equivalent to the complex vectors 
$$z=(z_1, \ldots , z_m), 
\frac{\partial f_1}{\partial \bar{z}}=
\Big({\frac{\partial f_1}{\partial \bar{z}_1}}, \ldots ,{\frac{\partial f_1}{\partial \bar{z}_m}}\Big), 
\ldots , 
\frac{\partial f_q}{\partial \bar{z}}=
\Big({\frac{\partial f_q}{\partial \bar{z}_1}}, \ldots ,{\frac{\partial f_q}{\partial \bar{z}_m}}\Big)$$ 
being linearly independent over $\C$. 
\end{proof}

Now, we are ready to prove our main theorem. 
First we reprove Ahern-Rudin's result 
from the viewpoint of Proposition~\ref{ns condition}, 
and then, prove Theorem~\ref{main}. 

\begin{proof}[Proof of Theorem~\ref{AR}]
When $(z_1, z_2)\ne (0, 0)$, 
the two vectors $(z_1, z_2)$ and $(\dfrac{\partial P}{\partial \bar{z}_1}, \dfrac{\partial P}{\partial \bar{z}_2})$ 
are linearly independent over $\C$. Indeed, the function 
$$z_2\dfrac{\partial P}{\partial \bar{z}_1} - z_1\dfrac{\partial P}{\partial \bar{z}_2}
=|z_2|^2(|z_2|^2-2|z_1|^2)-i|z_1|^2(|z_1|^2-2|z_2|^2)$$ 
vanishes only at the origin $(0,0)\in \C^2$. 
Therefore, by Proposition~\ref{ns condition}, the embedding $F$ is a totally real embedding. 
\end{proof}

\begin{proof}[Proof of Theorem~\ref{main}] 
Suppose $z=(z_1, z_2, \ldots , z_{2n-1}, z_{2n})\ne (0,0, \ldots , 0,0)$.  
Then there exists $j$ such that $(z_{2j-1}, z_{2j})\ne (0, 0)$. 
For such a $j$, the two vectors $(z_{2j-1}, z_{2j})$ and $\dfrac{\partial P}{\partial \bar{z}}(z_{2j-1}, z_{2j})$ 
are linearly independent over $\C$ by the proof of Theorem~\ref{AR}. 
Hence, the two vectors $z=(z_1, z_2, \ldots , z_{2n-1}, z_{2n})$ and 
$$\frac{\partial Q}{\partial \bar{z}}(z)
=(\frac{\partial P}{\partial \bar{z}}(z_1, z_2), \ldots, \frac{\partial P}{\partial \bar{z}}(z_{2n-1}, z_{2n}))$$  
are linearly independent over $\C$. 
Then, by Proposition~\ref{ns condition}, the embedding $F$ is CR regular. 
\end{proof}

\section*{Acknowledgements}
The author has been  
supported in part by the Grant-in-Aid for Young Scientists (B), 
No.~17K14193, Japan Society for the Promotion of Science. 
%


\end{document}